\documentclass[11pt]{article}
\usepackage{amsmath,amsfonts,amsthm,amscd,amssymb,graphicx}


\usepackage{graphicx}

\usepackage{amssymb}
\usepackage[colorlinks,citecolor=blue,linkcolor=blue]{hyperref}
\usepackage{xcolor}




\def\rit{{\Bbb R}}
\def\cit{{\Bbb C}}

\def\zit{{\Bbb Z}}

\def\eps{\varepsilon}

\newcommand{\per}{{per}}

\newtheorem{theorem}{Theorem}[section]
\newtheorem{lemma}[theorem]{Lemma}
\newtheorem{e-proposition}[theorem]{Proposition}

\newtheorem{e-definition}[theorem]{Definition\rm}

\newtheorem{theoreme}{Th\'eor\`eme}[section]

\newtheorem{proposition}[theoreme]{Proposition}

\setcounter{equation}{0}

\def\og{\leavevmode\raise.3ex\hbox{$\scriptscriptstyle\langle\!\langle$~}}
\def\fg{\leavevmode\raise.3ex\hbox{~$\!\scriptscriptstyle\,\rangle\!\rangle$}}

\def\beq{\begin{equation}}
\def\eeq{\end{equation}}

\begin{document}



%

\centerline{\Large \bf Bifurcations of viscous shear flows in a strip}

\bigskip

\centerline{D. Bian\footnote{School of Mathematics and Statistics, Beijing Institute of Technology, Beijing 100081, China. Email: biandongfen@bit.edu.cn and emmanuelgrenier@bit.edu.cn}, 
E. Grenier$^1$,
M. Haragus\footnote{FEMTO-ST Institute, University of Franche Comt\'e,  25030 Besan\c con cedex, France. Email: mharagus@univ-fcomte.fr}
}



\subsubsection*{Abstract}


It is well-established that shear flows in a periodic strip are linearly unstable for the incompressible Navier Stokes equations
provided the viscosity is small enough.
In this article, under a natural spectral assumption which is satisfied for convex or concave analytic flows, 
we prove that shear flows undergo a Hopf bifurcation near their upper marginal stability curve. 
In particular, near this curve, there exist solutions which are periodic in $t$ and $x$.


\section{Introduction}


In this paper, we consider the incompressible Navier-Stokes equations in the strip $\Omega = \rit \times (0,1)$
 \beq \label{NS1} 
\partial_t u^\nu + (u^\nu \cdot \nabla) u^\nu - \nu \Delta u^\nu + \nabla p^\nu = f^\nu,
\eeq
\beq \label{NS2}
\nabla \cdot u^\nu = 0 ,
\eeq
together with the Dirichlet boundary condition
\beq \label{NS3} 
u^\nu = 0 \quad \hbox{when} \quad y = 0 \quad \hbox{and} \quad y = 1.
\eeq
We are interested in the nonlinear evolution of small perturbations of a given shear flow
$$
U(y) = (U_s(y),0).
$$
Note that this shear flow is a stationary solution of Navier-Stokes equations
provided we add the forcing term 
$$
f^\nu = (- \nu \Delta U_s,0).
$$ 
We assume that $U_s(y)$ is a smooth function, symmetric with respect to $y = 1/2$, that $U_s(0) = 0$ 
and that $\partial_y U_s(0) \ne 0$. In all this paper, the  various functions will be symmetric with respect to $y = 1/2$.

\medskip

We can distinguish between two kinds of shear flows. The first possibility is that the shear flow is spectrally unstable for the linearized Euler equations. In this case,
it is  nonlinearly unstable for Navier-Stokes equations: as shown in \cite{GN2}, small and smooth perturbations may
grow and reach a magnitude $O(1)$ in $L^\infty$.

The second possibility is that the shear flow is spectrally stable for the linearized Euler equations. However, it is
well-established in physics \cite{Reid,Reid2} and  proved in \cite{Guo2}, that such shear flows are spectrally
unstable for Navier-Stokes equations provided the viscosity is small enough. According to Rayleigh's criterium,
concave or convex flows belong to this second class. 

\medskip

In both cases, it is physically expected that generic small perturbations increase and ultimately lead to fully turbulent flows.
According to \cite{Schmidt}, the development of instabilities may follow various scenarios, 
like for instance the ``spectral'' scenario or the ``by-pass'' scenario.

\medskip

In the ``spectral'' scenario, the unavoidable noise of any experiment triggers some linear instability, which first increases exponentially
(a fact justified in \cite{Guo2}). Later, nonlinear mechanisms can
no longer be neglected with respect to the linear growth. The linear instability get tamed by the nonlinear terms and saturates. This second phase
gives birth to a new ``equilibrium''
or new ``base state", which in turn becomes unstable. Secondary instabilities appear and grow, leading to the destabilisation of this secondary base state, leading
to a third base state. Instabilities then becomes more and more complex and close, leading to fully developed turbulence
(so called ``route to chaos'').
  
\medskip
  
In this paper, we investigate the second phase of the spectral scenario,
namely the influence of the nonlinear terms on the initial linear instability, both for shear flows of the first kind (which are unstable with
respect to the Euler equation) and of the second kind (which are stable with respect to the Euler equation).
We show that, close to the upper marginal stability curve, there exist time and and space periodic solutions
of the nonlinear Navier-Stokes equations. These solutions are possible ``new base states" which are temporarily 
visited during the transition from laminar to turbulent shear flows.

In this article, we consider the case of a strip $\Omega = \mathbb{R} \times (0,1)$.
The half space case $\Omega = \mathbb{R} \times \mathbb{R}^+$, which requires completely different techniques, is considered in \cite{BGI}.


\section{Spectral analysis and statement of the results}



\subsection{The Orr-Sommerfeld equation}


Let us first introduce the classical Orr-Sommerfeld equations (see \cite{BG4} or \cite{Reid} for more details).
Let $L_{NS}$ be the linearized Navier-Stokes operator near the shear flow profile $U$, defined by
\beq \label{linearNS}
L_{NS} \; v  = (U \cdot \nabla) v + (v\cdot \nabla) U - \nu \Delta v + \nabla q,
\eeq
with $\nabla \cdot v = 0$ and Dirichlet boundary conditions at $y = 0$ and $y = 1$.

The solution of the linear equation
\beq \label{liinearNS2}
\partial_t v + L_{NS} \, v = f
\eeq
is given by Dunford's formula
\beq \label{contour1}
v(t,x) = {1 \over 2 i \pi} \int_\Gamma e^{\lambda t} (L_{NS} + \lambda)^{-1} v_0 \; d\lambda
\eeq
in which $\Gamma$ is a contour on the ``right'' of the spectrum.
This leads to study the resolvent of $L_{NS}$
\beq \label{resolvant}
(L_{NS} + \lambda) v = f,
\eeq
where $f$ is a given forcing term and $\lambda$ a complex number.

We take the Fourier transform in the $x$ variable, with dual variable $\alpha$
and denote by $L_{NS,\alpha}$ the Fourier transform of $L_{NS}$. 
Let $Sp_{\alpha,\nu}$ be the set of the eigenvalues $\lambda$ of $L_{NS,\alpha}$.
Taking advantage of the divergence free condition, we introduce the stream function $\psi$ and take the Fourier
transform in $x$ and the Laplace transform in $t$, which leads to look for solutions of the form
$$
v = \nabla^\perp \Bigl( e^{i \alpha (x - c t) } \psi(y) \Bigr) .
$$
Note that $\lambda = - i \alpha c$, according to the traditional notations. 
We also take the Fourier and Laplace transform of the forcing term $f$ by setting
$$
f = \Bigl( f_1(y),f_2(y) \Bigr) e^{i \alpha (x - c t) } .
$$
We denote by $L_{NS,\alpha}$ the operator $L_{NS}$ after the Fourier transform in $x$.
Taking the curl of (\ref{resolvant}), we get the classical Orr-Sommerfeld equation
\beq \label{Orrmod0}
Orr_{c,\alpha,\nu}(\psi) = \alpha (U_s - c)  (\partial_y^2 - \alpha^2) \psi - \alpha U_s''  \psi  
 + i \nu   (\partial_y^2 - \alpha^2)^2 \psi =  i {\nabla \times f }
\eeq
where
$$
\nabla \times (f_1,f_2) = i \alpha f_2 - \partial_y f_1.
$$
The Dirichlet boundary conditions
gives 
\beq \label{condOrr}
\psi(0) = \partial_y \psi(0) = 0,
\eeq
with similar equalities at $y = 1$,
and the symmetry condition leads to
\beq
\partial_y \psi(1/2) = \partial_y^3 \psi(1/2) = 0.
\eeq
Note that, in order to avoid a singularity at $\alpha = 0$, $Orr_{c,\alpha,\nu}$ is the usual Orr-Sommerfeld operator multiplied by $\alpha$.
When $\alpha = 0$, the Orr-Sommerfeld operator reduces to $- \lambda \partial_y^2 \psi + \nu\partial_y^4 \psi$.
We introduce
$$
\eps = - {\nu \over i \alpha} .
$$
Moreover, if $\psi$ is a solution of (\ref{Orrmod0}) with wavelength $\alpha$ then 
$\bar \psi$ is a solution of the same equation with wavelength $- \alpha$.


\subsection{Shear flows of the first kind}


By definition, the flow is linearly unstable for $\nu = 0$, which means that there exists some $\alpha_0$ and some complex number
$\lambda_0$ with $\Re \lambda_0 > 0$, such that $\lambda_0 \in Sp_{\alpha_0,0}$. 

Using the method of \cite{GN2}, we can prove that, provided $\nu$ remains small enough, there exists $\lambda_\nu$ with
$\Re \lambda_\nu > 0$ such that $\lambda_\nu \in Sp_{\alpha_0,\nu}$.
In particular, for $\nu \le \nu_\star$ with $\nu_\star$ small enough, there exists
$\lambda \in Sp_{\alpha_0,\nu}$ with $\Im \lambda > 0$, namely an ``unstable eigenvalue".

Let us fix such a $\nu$. Then for $\alpha$ large enough, a direct energy estimate shows that all the eigenvalues of  $L_{NS,\alpha}$
have a negative imaginary part. This implies that for every $0 < \nu \le \nu_\star$, there exists $\alpha_+(\nu)$ such that
all the eigenvalues of $L_{NS,\alpha}$ have a negative imaginary part if $\alpha > \alpha_+(\nu)$.
Moreover, if $\alpha < \alpha_+(\nu)$ and is close enough to $\alpha_+(\nu)$, there exists at least one unstable eigenvalue.

To fulfil the bifurcation analysis, we need a little more, namely that for such $\alpha$, there exists only one eigenvalue with a positive real part,
and that this eigenvalue is simple (see assumption (H) below).
The proof of these two last properties is  open.


\subsection{Shear flows of the second kind}


In \cite{Guo2}, we proved that there exist two functions $\alpha_{lower}(\nu)$ and $\alpha_{upper}(\nu)$ such that, as $\nu$ goes to $0$,
$$
\alpha_{lower}(\nu) \sim C_- \nu^{1/3}, \qquad \alpha_{upper}(\nu) \sim C_+ \nu^{1/7}
$$
for some positive constants $C_-$ and $C_+$,
and such that, if $\alpha_{lower}(\nu) < \nu < \alpha_{upper}(\nu)$ 
there exists one and only one eigenvalue $\lambda(\alpha,\nu)$ of $L_{NS,\alpha}$ such that $\Re \lambda(\alpha,\nu) > 0$. 
Moreover, this eigenvalue is simple.

In the case of convex or concave flows, we moreover know \cite{BG4} that there exist no unstable eigenvalue if $\alpha > \alpha_{upper}(\nu)$.

For generic flow, we do not know whether $\alpha_{upper}(\nu)$ satisfies this last property. However, we know that for $\alpha$ large enough,
using a direct energy estimate, $L_{NS,\alpha}$ has no unstable eigenvalue. 
As in the case of shear flows of the first kind, we can construct $\alpha_+(\nu)$.
For $\alpha > \alpha_+(\nu)$, there is no unstable eigenvalue, and for $\alpha < \alpha_+(\nu)$, close to $\alpha_+(\nu)$, $L_{NS,\alpha}$ has
at least one unstable eigenvalue.

As in the previous paragraph, we need to assume that this eigenvalue is unique and simple, which will be stated in the assumption (H) below.


\subsection{A spectral assumption}


We will make the following spectral assumption:

\medskip

{\it \noindent 
Assumption (H):    
There exists a  function $\alpha_+(\nu)$, 
defined for $\nu$ small enough, with the following properties

\begin{itemize}
    \item[i)] $|\alpha| < \alpha_+(\nu)$, close to $\alpha_+(\nu)$, 
    there exists one and only one unstable eigenvalue $\lambda(\alpha,\nu)$, which moreover is simple,
    with corresponding eigenvector $\zeta(x,y)$ of the form
$$
\zeta(x,y) = \nabla^{\perp} \Bigl[ e^{i \alpha x} \psi(y) \Bigr],
$$
for some stream function $\psi(y)$ depending on $\alpha$ and $\nu$.

    \item[ii)] for $|\alpha| > \alpha_+(\nu)$, 
    $$
    \sup_{\lambda \in Sp_{\alpha,\nu}} \Re \lambda < 0 .
    $$

    \item[iii)] for $| \alpha | = \alpha_+(\nu)$, there exists only one eigenvalue
    with a zero real part, $\lambda(\alpha,\nu)$. This eigenvalue is simple
    and moreover
    \beq \label{derivative}
    (\partial_\nu \Im \lambda) (\alpha_+(\nu),\nu) > 0.
    \eeq
\end{itemize}
}

This assumption is proved for convex or concave shear flows in \cite{BG4}. 
Note that these points may be easily  numerically investigated.

Note that if $\lambda$ is an eigenvalue corresponding to a wavenumber $\alpha$,
$\bar \lambda$ is an eigenvalue corresponding to the eigenvalue $-\alpha$.


\subsection{Statement of the result}


Let us now state our main result. 

\begin{theorem}\label{t:red}
Let us assume that $U_s$ is a concave or convex analytic function, 
or more generally that $U_s$ satisfies the assumption (H). 
Let $\nu_\star > 0$ be small enough and set $\alpha_+=\alpha_+(\nu_\star)$. Let $\zeta(x,y)$ be
the eigenvector corresponding to the {purely imaginary eigenvalue} $\lambda(\alpha_+,\nu_\star)$ and let
$$
\mu = \nu - \nu_\star.
$$ 
Then, there exists a real number $\omega_+$ and complex numbers $c_1$ and $c_3$ such that for any $\mu$ sufficiently small, 
any sufficiently small bounded solution of \eqref{NS1}-\eqref{NS3} that is $2\pi/\alpha_+$-periodic in $x$ is of the form
\beq \label{formu}
u^\nu(t,x,y) = U(y)+ A(t) \zeta(x,y) + \bar A(t) \bar \zeta(x,y) + O \Bigl( |A| (| \mu | + |A|) \Bigr),
\eeq
where the complex-valued function $A$ satisfies the ordinary differential equation
\beq \label{formu2}
{dA \over dt} = i \omega_+ A + c_1 \mu A + c_3 A |A|^2+O \Bigl( |A| (|\mu|^2+|A|^4) \Bigr).
\eeq
Furthermore,
\[
i\omega_+ = \lambda(\alpha_+,\nu),\qquad c_1 = \partial_\alpha \lambda(\alpha_+,\nu),
\]
and $\Re c_1 < 0$.
If moreover $\Re c_3 < 0$, the Hopf bifurcation is supercritical. If $\Re c_3 > 0$, it is subcritical.
\end{theorem}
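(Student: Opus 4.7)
The plan is to perform a parameter-dependent center manifold reduction near the trivial equilibrium $u = U$ at $\nu = \nu_\star$ and apply Hopf normal form theory. Set $v = u^\nu - U$ and $\mu = \nu - \nu_\star$, so that $v$ satisfies
\[
\partial_t v + L_{NS,\nu_\star} v + \mu \Delta v + (v\cdot\nabla)v = 0,
\]
with $\nabla\cdot v = 0$, Dirichlet boundary conditions, and $2\pi/\alpha_+$-periodicity in $x$. I would work in a Sobolev space $X$ of such divergence-free fields in which $-L_{NS,\nu_\star}$ generates an analytic semigroup and the quadratic term is a bounded bilinear map into an appropriate base space.

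The key input is a precise spectral picture of $L_{NS,\nu_\star}$ on $X$, obtained by decomposing into Fourier modes $\alpha = k\alpha_+$, $k\in\zit$. At $k=0$, the divergence-free and Dirichlet conditions force $v_2\equiv 0$, leaving the heat equation $\partial_t v_1 = \nu_\star \partial_y^2 v_1$ with strictly negative spectrum. At $k=\pm 1$, assumption (H)(iii) provides exactly the simple purely imaginary eigenvalues $\pm i\omega_+$ with a spectral gap to the rest. At $|k|\ge 2$, assumption (H)(ii) rules out unstable eigenvalues, and coercivity of $L_{NS,\nu_\star,k\alpha_+}$, whose symbol contains the dominant dissipative term $-\nu_\star k^2\alpha_+^2$, gives uniform dissipation at high modes; compactness of the resolvent together with analytic dependence on $\alpha$ then upgrades these estimates to a \emph{uniform} spectral gap in $k$. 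One obtains a $2$-dimensional center subspace $X_c = \mathrm{span}(\zeta,\bar\zeta)$ and a hyperbolic stable complement with exponential dichotomy estimates.

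With this spectral setup I would invoke a parameter-dependent center manifold theorem for parabolic semilinear PDEs (see e.g.\ Vanderbauwhede--Iooss, or the exposition in the book of Haragus--Iooss), producing a $C^k$ local invariant manifold $M_\mu \subset X$ of dimension $2$, tangent to $X_c$ at the origin, on which every small bounded $2\pi/\alpha_+$-periodic solution must lie; this yields the representation \eqref{formu}. The system inherits the $S^1$-action $v(x,y)\mapsto v(x+a,y)$, which becomes $A\mapsto e^{-i\alpha_+ a}A$ on the center coordinate. Since the reduced flow commutes with this circle action, a Birkhoff normal form transformation brings it into the equivariant form \eqref{formu2}. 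The linear coefficient $c_1$ is then the derivative of the bifurcating eigenvalue with respect to the bifurcation parameter at $(\alpha_+,\nu_\star)$, computed by implicit differentiation of the eigenvalue equation, and $\Re c_1 < 0$ follows from transversal crossing of the imaginary axis together with \eqref{derivative} and the sign of the slope of $\alpha_+(\nu)$. The cubic coefficient $c_3$ is obtained in the usual way: solve $L_{NS,\nu_\star}\,\eta_2$ against the quadratic self-interaction of $\zeta$ (which generates $k=0$ and $k=\pm 2$ Fourier components only), and then project the resulting cubic residue back onto $\zeta$.

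The main obstacle I anticipate is the uniform spectral gap across all Fourier modes. Pointwise-in-$k$ absence of unstable eigenvalues is granted by (H), but the center manifold construction requires a \emph{quantitative} resolvent bound on a strip around the imaginary axis, uniform in $k$. High modes are controlled by the viscous coercivity, and the remaining compact range of $k$ by continuity in $\alpha$; the technical point is ruling out accumulation of eigenvalues on the imaginary axis, which should follow from compactness of the resolvent of each $L_{NS,\nu_\star,k\alpha_+}$ and the analyticity of the eigenvalue branches. Once this resolvent bound is in hand, the remainder of the argument is routine center manifold and Hopf normal form machinery, and the super- versus sub-critical classification falls out from the sign of $\Re c_3$ relative to $\Re c_1$.
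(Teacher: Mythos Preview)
Your proposal is correct and follows essentially the same architecture as the paper: write $v=u^\nu-U$, formulate \eqref{NSb1}--\eqref{NSb3} as a dynamical system on divergence-free periodic fields via the Leray projection, use assumption~(H) to isolate the two simple center eigenvalues $\pm i\omega_+$, apply the parameter-dependent center manifold theorem of Haragus--Iooss, and exploit the $S^1$-equivariance from $x$-translations to obtain the normal form \eqref{formu2} with $c_1$ identified as the $\mu$-derivative of the bifurcating eigenvalue. The only notable difference is that the paper resolves your anticipated ``main obstacle'' (the uniform resolvent bound along the imaginary axis) not through the compactness/analyticity-in-$\alpha$ argument you sketch but via an explicit Green function construction for the Orr--Sommerfeld operator (the Proposition in Section~\ref{estimates}), which directly yields $\|(L_{\nu_\star}-i\omega)^{-1}\|_{\mathcal X\to\mathcal X}\le C/|\omega|$; note also that the sign of your $\mu\Delta v$ term should be reversed.
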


As usual in bifurcation theory, any small solution of (\ref{formu2}) provides a solution of \eqref{NS1}-\eqref{NS3} of the form (\ref{formu}).
The proof of this result relies on a center manifold reduction.

There is no theoretical result on the coefficient $c_3$ and in particular on the sign of $\Re c_3$.
However, the value of $c_3$ can be {\it numerically} evaluated \cite{BG5}. 

If this Hopf bifurcation is supercritical, a stable time-periodic solution bifurcates for $\mu < 0$ small enough, of the form 
\begin{equation}\label{e:roll}
u^\nu(t,x,y) =  U(y)+ |\mu|^{1/2}  \Re \nabla^\perp \Bigl[  \sqrt{c_{1r} \over c_{3r}} \, e^{ i \omega_+ t + i \alpha_+ x} \psi(y) \Bigr] + O(|\mu|),
\end{equation}
where $c_{1r} = \Re c_1$ and $c_{3r}=\Re c_3$.
In particular,  the difference $u^\nu-U(y)$ is of size $O(\mu^{1/2})$ in $L^\infty$, and  depends on 
$y$ and on the combination $i \omega_+ t + i \alpha_+ x$.  It thus takes the form of a non-stationary ``roll'' in $(x,y)$
which moves with a constant speed $- \omega_+/\alpha_+$.
This justifies the second step of the spectral ``route to chaos":  the exponentially growing perturbations get saturated by
nonlinear effects and converge to a new ``base state", composed of moving rolls.

If this Hopf bifurcation is subcritical, there exists a particular solution of Navier-Stokes equation which is time and space periodic,
but this solution is unstable.

\medskip

Up to the best of our knowledge, this result is the first bifurcation result for the genuine Navier-Stokes equations
on a strip. In particular, it opens the way to the study of secondary instabilities.
We believe that the assumption (H) is generically true, but the proof of (H) deserves further investigations.


\subsection{Estimates on the resolvent \label{estimates}}


The Green function of Orr-Sommerfeld equation has been constructed and studied in  details in \cite{BG4} as $\nu$ goes to $0$. 
In this paper however, $\alpha$, $c$ and $\nu$ are small but fixed, and thus can be considered as being of order $O(1)$.
The inversion of (\ref{Orrmod0}) is then much easier.

\begin{proposition}
Let $\alpha_0 > 0$, Let $\psi$ and $f$ be such that 
$$
OS_{\lambda,\alpha,\nu}(\psi) = f.
$$
Then, uniformly in $| \alpha | \ge \alpha_0$ and in $\lambda$ large enough, we have
\beq \label{bound0}
\|  \psi \|_{L^2} + \|  \partial_y \psi \|_{L^2}  + \|  \partial_y^2 \psi \|_{L^2}   \lesssim | \lambda |^{-1} \| f \|_{L^2}
\eeq
where $L^2$ denotes $L^2(0,1)$.
The estimate (\ref{bound0}) is also true when $\alpha = 0$.
\end{proposition}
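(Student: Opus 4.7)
The plan is an energy estimate that yields the $H^1$ control, followed by a reduction to a second-order problem for the auxiliary variable $g:=(\partial_y^2-\alpha^2)\psi$ in order to recover the sharp $|\lambda|^{-1}$ rate on the $H^2$ norm.

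Multiply the equation $OS_{\lambda,\alpha,\nu}(\psi)=f$ by $\bar\psi$ and integrate over $(0,1)$. The boundary conditions $\psi(0)=\psi(1)=\partial_y\psi(0)=\partial_y\psi(1)=0$ make all boundary contributions from the integrations by parts vanish, leading to
\begin{equation*}
-i\lambda\bigl(\|\partial_y\psi\|_{L^2}^2+\alpha^2\|\psi\|_{L^2}^2\bigr)+i\nu\|(\partial_y^2-\alpha^2)\psi\|_{L^2}^2+\alpha\,\mathcal R(\psi) = \int_0^1 f\bar\psi\,dy,
\end{equation*}
where $\mathcal R(\psi)$ collects the $U_s$-dependent contributions and is bounded by $C\|U_s\|_{C^2}\bigl(\|\partial_y\psi\|_{L^2}^2+(1+\alpha^2)\|\psi\|_{L^2}^2\bigr)$ uniformly for $|\alpha|\ge\alpha_0$ in a bounded range. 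Separating real and imaginary parts and taking $|\lambda|$ large enough to absorb $\alpha\,\mathcal R(\psi)$, one obtains
\begin{equation*}
|\lambda|\bigl(\|\partial_y\psi\|_{L^2}^2+\alpha^2\|\psi\|_{L^2}^2\bigr)+\nu\|(\partial_y^2-\alpha^2)\psi\|_{L^2}^2 \lesssim \|f\|_{L^2}\|\psi\|_{L^2}.
\end{equation*}
Combined with Poincar\'e's inequality $\|\psi\|_{L^2}\lesssim\|\partial_y\psi\|_{L^2}$, this yields $\|\psi\|_{L^2}+\|\partial_y\psi\|_{L^2}\lesssim|\lambda|^{-1}\|f\|_{L^2}$.

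The $|\lambda|^{-1}$ rate on $\|\partial_y^2\psi\|_{L^2}$ does not follow from the previous estimate, which only gives $\|(\partial_y^2-\alpha^2)\psi\|_{L^2}\lesssim(\nu|\lambda|)^{-1/2}\|f\|_{L^2}$. To reach the sharp rate, set $g:=(\partial_y^2-\alpha^2)\psi$ and rewrite the Orr-Sommerfeld equation as the second-order ODE
\begin{equation*}
(i\lambda+\alpha U_s)g+i\nu(\partial_y^2-\alpha^2)g = f+\alpha U_s''\psi.
\end{equation*}
For $|\lambda|\gg\alpha\|U_s\|_\infty$, the coefficient $(i\lambda+\alpha U_s)$ has modulus $\ge|\lambda|/2$, so that it dominates the second-order diffusion term. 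A resolvent estimate for this second-order equation (either by direct inversion via the Green's function or by an energy estimate on $g$), combined with the $L^2$ bound on $\psi$ from the first step to handle the source term $\alpha U_s''\psi$, gives $\|g\|_{L^2}\lesssim|\lambda|^{-1}\|f\|_{L^2}$. Since $\psi$ vanishes at the boundary, elliptic regularity for $(\partial_y^2-\alpha^2)$ with Dirichlet conditions yields $\|\partial_y^2\psi\|_{L^2}\lesssim\|g\|_{L^2}+\alpha^2\|\psi\|_{L^2}\lesssim|\lambda|^{-1}\|f\|_{L^2}$.

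The main technical subtlety is in this second step. The function $g$ does not satisfy homogeneous boundary conditions, so the boundary contributions in the energy estimate for $g$ must be controlled by exploiting the traces $g|_{y\in\{0,1\}}=\partial_y^2\psi|_{y\in\{0,1\}}$ together with the Orr-Sommerfeld equation evaluated at $y=0,1$; equivalently, one may work in the codimension-two subspace of $L^2$ fixed by the constraints $\partial_y\psi(0)=\partial_y\psi(1)=0$ and use a Lagrange-multiplier formulation. The degenerate case $\alpha=0$ reduces to $-\lambda\partial_y^2\psi+\nu\partial_y^4\psi=f$ and is handled by the same argument applied to the auxiliary variable $g=\partial_y^2\psi$.
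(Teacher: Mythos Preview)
Your approach is genuinely different from the paper's. The paper does not use energy estimates at all: it freezes the coefficients of the Orr--Sommerfeld operator to obtain the constant-coefficient fourth-order equation
\[
-\eps\,\partial_y^4\psi-\tilde c\,\partial_y^2\psi+\alpha^2 c\,\psi=f,
\]
constructs its Green's function explicitly from the four exponential solutions $e^{\pm\mu_s y}$, $e^{\pm\mu_f y}$ (two ``slow'' roots $\mu_s=O(1)$ and two ``fast'' roots $\mu_f\sim\sqrt{\tilde c/\eps}$), estimates this Green's function in $L^1$ to get the $|\lambda|^{-1}$ rate on all of $\psi$, $\partial_y\psi$, $\partial_y^2\psi$ at once, and then recovers the variable-coefficient $U_s$-terms by a convergent Neumann iteration. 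There is no splitting into an $H^1$ step and an $H^2$ step.

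Your energy argument for the $H^1$ bound is correct and more elementary than the Green's function construction; it is also all that is actually needed for the resolvent bound $\|(L_{\nu_\star}-i\omega)^{-1}\|_{{\cal X}\to{\cal X}}\le C/|\omega|$ used later, since $v=\nabla^\perp\psi$ only involves $\psi$ and $\partial_y\psi$.

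Your second step, however, has a real gap which you flag but do not close. The auxiliary function $g=(\partial_y^2-\alpha^2)\psi$ satisfies no pointwise boundary conditions; the two remaining constraints $\partial_y\psi(0)=\partial_y\psi(1)=0$ become two nonlocal linear functionals of $g$. Neither an energy estimate on $g$ (the boundary term $[\partial_y g\,\bar g]_0^1=[\partial_y^3\psi\,\overline{\partial_y^2\psi}]_0^1$ is uncontrolled) nor a Green's function for the second-order equation is available without first fixing a well-posed boundary-value problem for $g$. Evaluating the Orr--Sommerfeld equation at $y=0,1$ merely reproduces the interior equation for $g$ at those points and gives no new information. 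The Lagrange-multiplier route you mention is in principle viable, but carrying it out forces you to estimate the boundary corrector, which for large $|\lambda|$ is precisely the fast exponential $e^{-\mu_f y}$ that the paper's Green's function isolates explicitly. In other words, completing your second step rigorously leads back to the boundary-layer analysis the paper performs directly on the fourth-order problem.
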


\begin{proof}
When $\alpha = 0$, the Orr-Sommerfeld equation reduces to
$$
- \lambda \partial_y^2 \psi + \nu \partial_y^4 \psi = f,
$$
thus (\ref{bound0}) is true.

When $\alpha \ne 0$, we rewrite Orr-Sommerfeld equations as 
\beq \label{eq1}
- \eps \partial_y^4 \psi - \tilde c  \partial_y^2 \psi + \alpha^2 c \psi = E(\psi)
\eeq
where
\beq \label{eq2}
E(\psi) = - U_s \partial_y^2 \psi  
+ \alpha^2 U_s  \psi + U_s'' \psi + \eps \alpha^4 \psi  + f,
\eeq
where $f$ is some forcing term and where 
$$
\tilde c = c - 2 \eps  \alpha^2.
$$
We recall that $\lambda = - i \alpha c$ and that $\alpha$ is fixed, thus large $\lambda$ corresponds to large $c$ and $\tilde c$.
We first consider the approximate equation
\beq \label{app}
- \eps \partial_y^4 \psi - \tilde c \partial_y^2 \psi + \alpha^2 c \psi = f .
\eeq
Looking for solutions of the form $e^{\mu y}$ leads to the characteristic equation 
$$
- \eps \mu^4 - \tilde c \mu^2 + \alpha^2 c = 0,
$$
and thus to
$$
\mu^2 = { - \tilde c  \pm \sqrt{\tilde c^2 + 4 \eps \alpha^2 c} \over 2 \eps } .
$$
There exists thus two ``slow solutions'' $e^{\pm \mu_s y}$ where $\mu_s$ is of order $O(1)$ and
two ``fast solutions'' $e^{\pm \mu_f y}$ where $\mu_f \sim \sqrt{\tilde c / \eps}$, namely large when $\lambda$ is large.

The Green function $G_0(x,y)$ of (\ref{app}), namely the solution corresponding to $f = \delta_x$, is the sum of an ``interior" Green function $G^{int}(x,y)$ which
takes care of the source $f$, and of a "boundary layer" Green function $G^b(x,y)$ which recovers the boundary condition. 

In order to ensure the symmetry with respect to $y = 1/2$, we choose $G^{int}(x,y)$ of the form
$$
G^{int}(x,y) = a  \Bigl[ e^{- \mu_f | x - y |}  + e^{- \mu_f | x -(1 - y) |}  \Bigr] + b  \Bigl[ e^{- \mu_s |x - y| } + e^{- \mu_s |x - (1 - y) | }  \Bigr]
$$
for some constants $a$ and $b$ to be determined.
Note that, by construction, $G^{int}(x,y)$ and $\partial_y^2 G^{int}(x,y)$ are continuous at $x$.
We must write that $\partial_y G^{int}(x,y)$ is also continuous, and that $\partial_y^3 G^{int}(x,y)$ has a jump of size $-1 / \eps$,
which leads to
$$
\mu_f a + \mu_s b = 0
$$
and
$$
\mu_f^3 a + \mu_s^3 b = {1 \over 2 \eps} .
$$
We thus have
$$
a = {1 \over 2 \eps} {1 \over \mu_f} {1 \over \mu_f^2 - \mu_s^2} = O(  | \lambda |^{-3/2})
$$
and
$$
b = - {1 \over 2 \eps} {1 \over \mu_s} {1 \over \mu_f^2 - \mu_s^2} = O( | \lambda |^{-1}).
$$
In particular,  for large $\lambda$,
\beq \label{estimGint}
\int_0^1 | G^{int}(x,y) |  \, dy + \int_0^1 | \partial_y G^{int}(x,y) |  \, dy + \int_0^1 | \partial_y^2 G^{int}(x,y) | \, dy 
\lesssim | \lambda |^{-1} 
\eeq
and similarly for integrals over $x$.

We now look for $G^b(x,y)$ under the form
$$
G^b(x,y) = a' \Bigl[ e^{- \mu_f y} + e^{- \mu_f (1 - y)} \Bigr] + b' \Bigl[ e^{- \mu_s y} + e^{- \mu_s (1 - y)} \Bigr]
$$
where the coefficients $a'$ and $b'$ depend on $x$.
We then write
$$
G^{int}(x,0) + G^b(x,0) = \partial_y G^{int}(x,0) + \partial_y G^b(x,0) = 0,
$$
which explicitly gives $a'$ and $b'$. It can then be checked that $G^b(x,y)$ also satisfies estimates of the form (\ref{estimGint}).

We now solve the genuine Orr-Sommerfeld equation, written under the form (\ref{eq2}) by iteration, starting from 
$$
\psi_1 = G_0 \star_x f 
$$
where $G_0 = G^{int} + G^b$.
Note that $\| \psi_1 \|_{L^2} \lesssim | \lambda |^{-1} \| f \|_{L^2}$.
We then define by induction $E_n = E(\psi_n)$ and $\psi_{n+1} = G_0 \star_x E_n$. 
As $\| E_n \|_{L^2} \lesssim \| \psi_n \|_{L^2}$, this procedure converges, and the solution of (\ref{eq2}) is given by
$$
\psi = \sum_j \psi_j ,
$$
and satisfies (\ref{bound0}).
\end{proof}


\section{Bifurcation analysis \label{part3}}


We are interested in solutions which bifurcate close to $(\alpha_+(\nu),\nu)$. 
We fix $\nu_\star$ small enough, so that the assumption (H) holds, 
set $\alpha_+=\alpha_+(\nu_\star)$ and take $\mu = \nu - \nu_\star$ as bifurcation parameter.


\subsection{Dynamical system formulation}


For the system (\ref{NS1})-(\ref{NS3}), we consider solutions of the form
$$
u^\nu(t,x,y) = U(y) + v(t,x,y),
$$
in which $U(y)$ is the shear profile.
The perturbation $v$ satisfies the system
\beq \label{NSb1}
\partial_t v = - L_{NS} \, v -  (v \cdot \nabla) v,
\eeq
\beq \label{NSb2}
\nabla \cdot v = 0,
\eeq
\beq \label{NSb3}
v = 0, \quad \hbox{for} \quad y = 0 \quad \hbox{and} \quad y = 1,
\eeq
in which $L_{NS}$ is the operator defined by \eqref{linearNS}.
For this system, we look for solutions $v$ which are $2 \pi/\alpha_+$-periodic in $x$.

As a first step, we formulate the
system \eqref{NSb1}-\eqref{NSb3} as a dynamical system of the form
\beq \label{1}
\partial_t v = L_\nu v + B(v) ,
\eeq
in a suitably chosen phase space ${\cal X}$, in which
$L_\nu$ is a closed operator with domain ${\cal Z}$, the phase space ${\cal X}$ includes the condition $\nabla \cdot v = 0$ and the domain ${\cal Z}$ includes
the remaining boundary conditions, and, more importantly, the gradient term $\nabla q$ is eliminated by a suitable projection.
Such a formulation is well-known (see for instance \cite{CI} or \cite[Chapter 5]{Haragus}).
We  take
$$
{\cal X} = \Bigl\{v\in (L^2_\per(\Omega))^2 \quad | \quad \nabla\cdot v=0,\quad v_2|_{ y = 0} =v_2|_{ y = 1} = 0 \Bigr\}, 
$$
where  $\Omega = \rit \times (0,1)$,  the subscript $\per$ means that the
functions are $2\pi/\alpha_+$-periodic in $x$ and where $v=(v_1,v_2)$. 
Next, we consider the Leray projection $\Pi_0: (L^2_\per(\Omega))^2 \to {\cal X}$, defined as the orthogonal projection on
${\cal X}$ with respect to the standard $(L^2_\per(\Omega))^2$ scalar product, so that
$$
\Pi_0 \nabla q = 0.
$$
Then, projecting with $\Pi_0$, we obtain the dynamical system formulation (\ref{1}) in which
$$
L_\nu = - \Pi_0 L_{NS},
\qquad B(v) = - \Pi_0 (v \cdot \nabla v) .
$$ 
In this formulation, $L_\nu$ is a closed linear operator in ${\cal X}$ with domain
$$
{\cal Z} = \Bigl\{ v \in (H^2_\per(\Omega))^2 \quad | \quad \nabla \cdot v = 0, \quad v|_{y = 0} =v|_{y = 1} = 0 \Bigr\} .
$$
Moreover, $B: {\cal Z} \to {\cal X}$ is well-defined and quadratic, thus $B$ is of class $C^k$ for any $k \ge 1$.


\subsection{Spectral analysis}


The following properties of the linear operator $L_{\nu}$ are an immediate consequence of the results of Section~\ref{estimates} and of the assumption (H). 
\begin{lemma}\label{l:linear}
\begin{itemize}
\item[(i)]
The linear operator  $L_{\nu_\star}$ has precisely two simple purely imaginary eigenvalues $\pm i \omega_+$ with associated eigenvectors $\zeta$  and $\bar \zeta$ of the form
$$
\zeta(x,y) = e^{i \alpha_+ x} \phi(y).
$$
Furthermore, there exists $\sigma > 0$ such that the spectrum $\sigma(L_{\nu_\star})$ satisfies
$$
\sigma(L_{\nu_\star}) \setminus \{\pm i \omega_+ \} \subset \{ \lambda \in \cit \quad | \quad \Re \lambda < - \sigma \} .
$$
\item[(ii)] There exist constants $C > 0$ and $\omega_0 > 0$ such that the following estimate holds
$$
\| ( L_{\nu_\star} - i \omega)^{-1} \|_{ {\cal X} \to {\cal X}} \le {C \over | \omega |},
$$
for any $| \omega | \ge \omega_0$. 
\end{itemize}
\end{lemma}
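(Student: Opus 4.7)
\textbf{Proof plan for Lemma~\ref{l:linear}.} The strategy is to exploit the periodicity in $x$ and decompose $L_{\nu_\star}$ as a direct sum over Fourier modes. Writing $v = \sum_{n \in \zit} e^{i n \alpha_+ x} v_n(y)$, the operator $L_{\nu_\star}$ preserves this decomposition, giving an operator $L_{\nu_\star}|_n$ on each mode which, up to the Leray projection, coincides with $-L_{NS, n\alpha_+}$. The spectrum of $L_{\nu_\star}$ is then the closure of $\bigcup_{n \in \zit} \sigma(L_{\nu_\star}|_n)$, and analogously a uniform-in-$n$ resolvent bound yields the desired bound on ${\cal X}$.

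For part~(i), I treat $n=0$ separately: divergence-freeness plus the boundary condition force $v_2 \equiv 0$, and since $(U\cdot\nabla) v = 0$ and $(v\cdot\nabla)U = 0$ when $v_2=0$ and $v$ is $x$-independent, $L_{\nu_\star}|_0$ reduces to $v_1 \mapsto \nu_\star\,\partial_y^2 v_1$ on $L^2(0,1)$ with Dirichlet boundary conditions, whose spectrum $\{-\nu_\star k^2\pi^2 : k\ge 1\}$ is strictly to the left of the imaginary axis. For $n\neq 0$, the spectrum of $L_{\nu_\star}|_n$ is exactly the set of eigenvalues of $L_{NS, n\alpha_+}$ appearing in assumption (H) with wavenumber $n\alpha_+$. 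For $n=\pm 1$ we have $|n\alpha_+|=\alpha_+(\nu_\star)$, so (H)(iii) yields exactly one simple eigenvalue on the imaginary axis, namely $i\omega_+:=\lambda(\alpha_+,\nu_\star)$ for $n=+1$ and its complex conjugate $-i\omega_+$ for $n=-1$ (using the symmetry $\lambda(-\alpha)=\overline{\lambda(\alpha)}$); the associated eigenvectors are of the prescribed form $\zeta=e^{i\alpha_+ x}\phi(y)$ and $\bar\zeta$. For $|n|\ge 2$, $|n\alpha_+|>\alpha_+(\nu_\star)$ and (H)(ii) places $\sigma(L_{\nu_\star}|_n)$ in $\{\Re\lambda<0\}$. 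A uniform gap $\sigma>0$ then follows by combining: on finitely many modes, the compact-in-$\lambda$ portion of the spectrum is discrete and strictly negative; for $|n|$ large, a direct energy estimate on $L_{NS,n\alpha_+}$ gives $\Re\lambda \lesssim -\nu_\star n^2\alpha_+^2$.

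For part~(ii), the resolvent bound is inherited mode by mode from the Orr--Sommerfeld estimate of the proposition in Section~\ref{estimates}. Given $f=\sum_n e^{i n \alpha_+ x} f_n(y) \in {\cal X}$, set $v_n = (L_{\nu_\star}|_n - i\omega)^{-1} f_n$. For $n\neq 0$, write $v_n = \nabla^\perp(e^{i n\alpha_+ x}\psi_n(y))$; the stream function $\psi_n$ solves an Orr--Sommerfeld equation with right-hand side proportional to $\nabla\times f_n$ and eigenvalue parameter $\lambda=-in\alpha_+ c$ with $i\omega=-in\alpha_+ c$, i.e.\ $|\lambda|=|\omega|$. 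The proposition, applied with $\alpha_0=\alpha_+$, gives $\|\psi_n\|_{L^2}+\|\partial_y\psi_n\|_{L^2}+\|\partial_y^2\psi_n\|_{L^2}\lesssim |\omega|^{-1}\|f_n\|_{L^2}$ uniformly in $n\neq 0$ and $|\omega|\ge\omega_0$, whence $\|v_n\|_{L^2}\lesssim |\omega|^{-1}\|f_n\|_{L^2}$. For $n=0$, the resolvent of $\nu_\star\partial_y^2$ on Dirichlet $L^2(0,1)$ trivially satisfies the same bound for $|\omega|\ge\omega_0$. Summing in $n$ via Plancherel yields the claimed estimate.

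The main obstacle is securing the uniformity in $n$: one must check that the Orr--Sommerfeld bound of Section~\ref{estimates}, proved for fixed $\alpha$ of order $O(1)$, in fact holds uniformly for $|\alpha|\ge\alpha_+$, and that the passage from $\psi_n$ to $v_n=\nabla^\perp(e^{i n\alpha_+ x}\psi_n)$ does not introduce constants growing with $n$ (this is where having a bound on $\partial_y^2\psi_n$, not just on $\psi_n$, is used, together with the identity $\|v_n\|_{L^2}^2 = \|\partial_y\psi_n\|_{L^2}^2 + (n\alpha_+)^2\|\psi_n\|_{L^2}^2$ rescaled against $|\omega|$ via $|\omega|=|n\alpha_+ c|$). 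Once this uniformity is in place, both assertions reduce to a direct application of assumption (H) and of the proposition in Section~\ref{estimates}.
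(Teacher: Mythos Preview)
Your proposal is correct and follows precisely the route the paper intends: the paper does not give a detailed proof of this lemma but simply states that it is ``an immediate consequence of the results of Section~\ref{estimates} and of the assumption (H)'', and your argument is the natural unpacking of that claim via the Fourier decomposition in $x$, treating $n=0$, $n=\pm1$, and $|n|\ge2$ separately with (H) and the proposition on the Orr--Sommerfeld resolvent. Your identification of the uniformity-in-$n$ issue is apt and is exactly why the proposition is stated uniformly in $|\alpha|\ge\alpha_0$.
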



\subsection{Center manifold reduction}


Lemma \ref{l:linear} allows us to apply a center manifold theorem to the dynamical system \eqref{1} for $\nu$
close to $\nu_\star$ and for $\alpha_+=\alpha_+(\nu_\star)$. This will prove Theorem~\ref{t:red}.

We set $\nu =\nu_\star+ \mu$, so that $\mu$ is now a small bifurcation parameter.
The properties in Lemma~\ref{l:linear} and the property that $B$ is quadratic imply that the hypotheses of the center manifold theorem 
\cite[Chapter $2$: Theorems $2.20$ and $3.3$]{Haragus} are satisfied, for $\mu$ sufficiently small. 
As a consequence, small bounded solutions of \eqref{1} are of the form
$$
v(t) = A(t) \zeta + \bar A(t) \bar \zeta + O \Bigl( |A|(| \mu | + |A|) \Bigr), 
$$
in which $\zeta$ and $\bar\zeta$ are the eigenvectors of $L_{\nu_\star}$ in Lemma~\ref{l:linear} and  $A$ is a complex-valued function  satisfying an ordinary differential equation of the form
\beq \label{3}
{d A \over d t} = i \omega_+ A_+ + f( \mu, A, \bar A).
\eeq
Note that $\bar A(t)$ satisfies the complex conjugated equation. 

We can obtain a more precise description of $f(\mu,A,\bar A)$ by taking into account the invariance of the equations \eqref{NS1}-\eqref{NS3} under translations in $x$. Indeed, this invariance implies that the dynamical system (\ref{1}) is equivariant under the action of the continuous group $(\tau_a)_{a \in \rit/2\pi\zit}$ defined by
$$
\tau_a v(x,y) = v(x+a/\alpha,y).
$$
This symmetry is inherited by the reduced system (\ref{3}). Since
$\tau_a \zeta = e^{i a } \zeta$,
the action of $\tau_a$ on $A(t)$ reads 
$$
\tau_a A = e^{ia} A, \qquad \forall a \in \rit/2\pi\zit.
$$
Consequently,  the equivariance of \eqref{3} under the action of $\tau_a$ implies that 
$$
f(e^{i a} A , e^{- i a } \bar A,\mu) = e^{i a} f(A,\bar A,\mu),
$$
so that
\begin{equation}\label{fg}
f(A,\bar A,\mu) = A g(|A|^2,\mu),
\end{equation}
according to \cite[ Lemma $2.4$, Chapter $1$]{Haragus}. To leading order, we find the equation
\beq \label{4}
{d A \over dt} = i \omega_+ A + c_1 \mu A + c_3 A |A|^2,
\eeq
the higher order terms being $O(|A|(|\mu|^2+|A|^4))$.
This proves the result in Theorem~\ref{t:red} (in which $\omega_+$, $c_1$ and $c_3$ are the ones above multiplied by $\alpha_+$).

We note that 
the symmetry of $U_s(y)$ with respect to $y=1/2$, i.e., $U_s(y)=U_s(1-y)$, implies the equivariance of \eqref{3} under the reflection $S$ defined by
\[
S(v_1,v_2)(x,y) = (v_1,-v_2)(x,1-y),
\]
and the equivariance of the reduced system under the induced reflection.
Since $S\zeta=-\zeta$,
the action of the induced reflection on $A(t)$ is given by $SA=-A$. This implies that the reduced vector field $f$ is odd in $(A,\bar A)$, a property that is clearly satisfied by the vector field $f$ from \eqref{fg}.


\subsection{Hopf bifurcation}


To analyze the solutions of \eqref{4}, we need to compute the signs of the real parts of the coefficients $c_1$ and $c_3$.

The coefficient $c_1$ can be related to the eigenvalue $\lambda(\alpha_+,\nu_\star)$. 
Indeed, $\lambda(\alpha_+,\nu_\star)$ is precisely the eigenvalue
of the linearization of the reduced equation (\ref{3}) at $A = 0$. This implies that
$$
\lambda(\alpha_+,\nu_\star + \mu) =  i \omega_+ + c_1 \mu + O(\mu^2) ,
$$
which then allows to determine the sign of $\Re c_1$. The assumption (H) directly gives $\Re c_1 < 0$.

To compute $c_3$, one can use the method given in \cite[Chapter $1$, page $16$]{Haragus} which provides an analytical formula of $c_3$. In the present case this type of computation seems out of reach, because it requires an explicit inversion of the Orr-Sommerfeld equation.
However, it is possible to evaluate $c_3$ numerically \cite{BG5}.
In the case of a symmetric profile, these numerical computations show that $\Re c_3 < 0$.

The signs of the coefficients $c_1$ and $c_3$ found above show that for the  differential equation \eqref{4},
a supercritical Hopf bifurcation occurs at $\mu=0$, and the same property holds for the reduced equation \eqref{3} (e.g., see \cite[Theorem 2.6, Chapter 1]{Haragus}). More precisely, we have the following properties:
\begin{itemize}
\item[(i)] for any sufficiently small $\mu$, the differential equation \eqref{3} possesses the equilibrium $A=0$ which is stable if $\mu>0$ and unstable if $\mu<0$;
\item[(ii)] for any sufficiently small $\mu<0$, the differential equation \eqref{3} possesses a unique periodic orbit $A_\mu=O(|\mu|^{1/2})$ which is stable.
\end{itemize}
Furthermore, setting $c_{1r}=\Re c_1$ and  $c_{3r}=\Re c_3$, we have that
\[
|A_\mu(t)| = \sqrt{\frac{c_{1r}}{c_{3r}}}|\mu|^{1/2} + O(|\mu|).
\]
Going back to the dynamical system \eqref{1} we obtain the stable periodic solution
\[
v(t)=  2\Re \Bigl(  A_\mu(t) \zeta \Bigr) + O(|\mu|) 
= 2 |\mu|^{1/2} \sqrt{\frac{c_{1r}}{c_{3r}}}\, \Re\Bigl( e^{i\omega_+t+i\alpha_+ x} \phi(y) \Bigr) + O(|\mu|),
\]
for $\mu<0$ sufficiently small, and going back to the original system \eqref{NS1}-\eqref{NS3} we obtain the solution \eqref{e:roll}.


\subsubsection*{Acknowledgements}


The authors would like to thank G. Iooss for fruitful discussions.
D. Bian is supported by NSFC under the contract 12271032.

\subsubsection*{Conflict of interest}  The authors state that there is no conflict of interest.

\subsubsection*{Data availability}  Data are not involved in this research paper.




\end{document}